%
%
%

\documentclass[11pt,reqno]{amsart} 

\usepackage{amssymb,latexsym}
\usepackage[draft=true]{hyperref}
\usepackage{multicol}
\setlength{\columnsep}{1cm}
\setlength{\columnseprule}{0.2pt}
 
\usepackage{cite} 

\usepackage[height=190mm,width=130mm]{geometry} 

\theoremstyle{plain}
\newtheorem{theorem}{Theorem}
\newtheorem{lemma}{Lemma}

\theoremstyle{definition}
\newtheorem{definition}{Definition}

\theoremstyle{remark}
\newtheorem{remark}{Remark}
\newtheorem{example}{Example}


\numberwithin{equation}{section} 

\begin{document}
\title[Fixed function and its application]{Fixed function and its application to Medical Science} 

\author{Pooja Dhawan$^*$, Jatinderdeep Kaur}
\address{Thapar University\\ School of Mathematics\\ Patiala\\
  Punjab\\ India}

\email{pdhawan12@gmail.com, jkaur@thapar.edu}


\author{Vishal Gupta}

\address{Maharishi Markandeshwar University \\ Department of Mathematics\\ Mullana\\ Haryana
 \\ India}


\email{vishal.gmn@gmail.com}

\begin{abstract}
In the present paper, the concept of contraction has been extended in a refined manner by introducing $\mathfrak{D}$-Contraction defined on a family $\mathfrak{F}$ of bounded functions. Also, a new notion of fixed function has been introduced for a metric space. Some fixed function theorems along
with illustrative examples have also been given to verify the effectiveness of our results. In addition, an application to medical science has also been presented. This application is based on best approximation of treatment plan for tumor patients getting intensity
modulated radiation therapy(IMRT). In this technique, a proper DDC matrix truncation has been used that significantly improves accuracy
of results. In 2013, Z. Tian \textit{et al.} presented a fluence map optimization(FMO) model for dose calculation by splitting the DDC
matrix into two components on the basis of a threshold intensity value. Following this concept, a sequence of functions
can be constructed through the presented results which contains different dose distributions corresponding to different patients and finally it converges to a fixed function. The fixed function obtained in this application represents the suitable doses of a number of tumor patients at the same
time. A nice explanation has also been given to check the authenticity of the application and the results.
\end{abstract}

\subjclass[2010]{47H10, 54H25, 62P10}

\keywords{Fixed function, Complete metric space, $\mathfrak{D}$-contraction, $\alpha-\psi$ contractive mapping}

\maketitle

\section{Introduction and preliminaries}
The concept of $\textit{``Contraction"}$ for a metric space was firstly introduced by polish mathematician Stefan Banach \cite{ban} to prove the existence and uniqueness of a fixed point. His principle known as \textit{``Banach Contraction"} ensures that the application of a continuous self mapping on two points of a complete metric space contracts the distance between those two points. According to his result, \textit{``A contraction self mapping defined on} \textit{a complete metric space possesses a unique fixed point which can be obtained as the limit of an } \textit{iteration scheme constructed by applying repeated images of the mapping} \textit{(starting from an} \textit{arbitrary point of space)"}. After that, many authors including Kannan \cite{kan}, Chatterjea \cite{cha}, $\acute{C}$iri$\acute{c}$ \cite{cir} gave extensions to this result by presenting  more robust contractive conditions.\\

By now, there exists considerable literature on all these generalizations in various spaces which are applicable in numerous fields. For more details, \{\cite{agg}, \cite{bha}, \cite{bor}, \cite{chan}, \cite{rho}, \cite{sha}, \cite{har}\} can be cited.\\

This paper deals with a unique approach in the field of contraction mappings introduced with a family of bounded functions. The contents of this paper have been divided into four sections. Section 1 is concerned with some basic definitions and results related to this paper. In section 2, main results have been presented with some illustrative examples whereas section 3 deals with an application to medical science. The last section of this paper presents the conclusion.\\

In order to prove the main results, we need some basic concepts, definitions and results from the literature.

\begin{definition}{\cite{ban}}
For a metric space $(X,d)$, a mapping $T:X\rightarrow X$ is called a contraction mapping on $X$ if for any real number $\lambda$ with $0\leq\lambda<1$, the following inequality holds:
\begin{equation*}
d(Tx,Ty)\leq\lambda d(x,y)~~for~~all ~~x,y \in X.
\end{equation*}
\end{definition}

\begin{remark}
 It can be easily seen that the distance between the images of any two points of a given set is contracting by a uniform
factor $\lambda<1$.
\end{remark}

\begin{example} {\cite{ban}}
Let $X=\mathbb{R}^2$ be a set equipped with standard metric $d$ (\textit{i.e.} {$d((x_1, y_1),(x_2, y_2))=\sqrt{(x_1-x_2)^2+ (y_1-y_2)^2}~for~all~~x_1,x_2,y_1,y_2 \in X$}) and $T:\mathbb{R}^2\rightarrow\mathbb{R}^2$ be the mapping defined as $Tx= \frac{3}{8}x$ for all $x\in\mathbb{R}^2$ where $x=(x_1,x_2)$. Then $T$ is a contraction on $X$ as $d(Tx,Ty)=\frac{3}{8}\sqrt{(x_1-y_1)^2+(x_2-y_2)^2}=\frac{3}{8}d(x,y).$
\end{example}

\begin{theorem} {\cite{ban}}
Let $(X,d)$ be a complete metric space and $T$ be the contraction mapping defined on $X$. Then $T$ possesses a unique fixed point $x$ in $X$ \textit{i.e.} $Tx=x$.
\end{theorem}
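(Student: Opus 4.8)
The plan is to prove the Banach Contraction Principle in its classical form: construct the Picard iteration sequence from an arbitrary starting point, show it is Cauchy using the contraction inequality and the completeness of the space to obtain a limit, verify that the limit is a fixed point, and finally establish uniqueness.

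First I would fix an arbitrary point $x_0 \in X$ and define the iteration sequence by $x_{n+1} = Tx_n$ for all $n \geq 0$. Applying the contraction inequality repeatedly gives $d(x_{n+1}, x_n) = d(Tx_n, Tx_{n-1}) \leq \lambda\, d(x_n, x_{n-1})$, and by induction $d(x_{n+1}, x_n) \leq \lambda^n d(x_1, x_0)$. This controls consecutive terms geometrically, which is the engine of the whole argument.

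Next, to show the sequence is Cauchy, I would take $m > n$ and use the triangle inequality to write $d(x_m, x_n) \leq \sum_{k=n}^{m-1} d(x_{k+1}, x_k) \leq \left(\sum_{k=n}^{m-1} \lambda^k\right) d(x_1, x_0) \leq \frac{\lambda^n}{1-\lambda}\, d(x_1, x_0)$, where the geometric series is summed using $0 \leq \lambda < 1$. Since $\lambda^n \to 0$ as $n \to \infty$, the right-hand side tends to zero, so $\{x_n\}$ is Cauchy. Because $(X,d)$ is complete, the sequence converges to some limit $x \in X$.

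It then remains to verify that $x$ is a fixed point and that it is unique. For the fixed-point property, I would note that $T$ is continuous (any contraction is Lipschitz, hence continuous), so passing to the limit in $x_{n+1} = Tx_n$ yields $x = Tx$; alternatively one estimates $d(Tx, x) \leq d(Tx, x_{n+1}) + d(x_{n+1}, x) \leq \lambda\, d(x, x_n) + d(x_{n+1}, x)$ and lets $n \to \infty$. For uniqueness, suppose $Tx = x$ and $Ty = y$; then $d(x,y) = d(Tx, Ty) \leq \lambda\, d(x,y)$, which forces $(1-\lambda)\, d(x,y) \leq 0$, and since $1 - \lambda > 0$ this gives $d(x,y) = 0$, i.e. $x = y$. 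The only genuinely delicate step is the Cauchy estimate: the summation over the geometric tail must be bounded uniformly in $m$ so that the bound depends only on $n$, and this is precisely where the strict inequality $\lambda < 1$ is indispensable.
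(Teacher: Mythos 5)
Your proposal is correct and is the standard Picard-iteration proof, which is essentially the same argument the paper itself uses: the paper states this theorem as a cited result of Banach without proof, but its proof of Theorem \ref{t26} reproduces exactly your scheme (geometric estimate on consecutive iterates, Cauchy via the bound $\frac{\lambda^q}{1-\lambda}d^*(f_0,f_1)$, completeness, continuity of the map to identify the limit as a fixed point, and uniqueness by the contraction inequality). No gaps; your uniqueness step via $(1-\lambda)d(x,y)\leq 0$ is even slightly cleaner than the paper's contradiction phrasing.
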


After this well known result, Reich{\cite{rei}} presented the following theorem:

 \begin{theorem} {\cite{rei}}
 Let $(X,d)$ be a complete metric space and $T$ be the self mapping defined on $X$ which satisfy the condition
\begin{equation*}
d(Tx,Ty)\leq \alpha d(x,Tx)+\beta d(y,Ty)+ \gamma d(x,y)
\end{equation*}
for all $x,y\in X$ and $\alpha,\beta,\gamma$ non negative with $\alpha+\beta+\gamma < 1$. Then $T$ admits a unique fixed point in $X$.
\end{theorem}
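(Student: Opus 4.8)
The plan is to mimic the proof of the Banach contraction principle (Theorem~1) by constructing a Picard iteration and showing it is Cauchy. First I would fix an arbitrary point $x_0\in X$ and define the sequence of iterates $x_{n+1}=Tx_n$ for $n\geq 0$. The goal is to prove that $\{x_n\}$ is Cauchy, so that completeness furnishes a limit, and then to verify that this limit is the unique fixed point.

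The key step is to extract a genuine contraction factor for consecutive iterates, since the hypothesis does not directly compare $d(Tx,Ty)$ with $d(x,y)$. Applying the contractive condition with $x=x_n$ and $y=x_{n-1}$, and using $Tx_n=x_{n+1}$ together with $Tx_{n-1}=x_n$, I would obtain
\[
d(x_{n+1},x_n)\leq \alpha\, d(x_n,x_{n+1})+\beta\, d(x_{n-1},x_n)+\gamma\, d(x_n,x_{n-1}).
\]
Collecting the $d(x_{n+1},x_n)$ terms on the left gives $(1-\alpha)\,d(x_{n+1},x_n)\leq(\beta+\gamma)\,d(x_n,x_{n-1})$, whence
\[
d(x_{n+1},x_n)\leq k\, d(x_n,x_{n-1}),\qquad k:=\frac{\beta+\gamma}{1-\alpha}.
\]
The crucial observation, on which the whole argument rests, is that the assumption $\alpha+\beta+\gamma<1$ forces $\beta+\gamma<1-\alpha$, and therefore $0\le k<1$. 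Iterating the inequality yields $d(x_{n+1},x_n)\leq k^n\, d(x_1,x_0)$, and then a standard triangle-inequality and geometric-series estimate shows that for $m>n$ one has $d(x_m,x_n)\leq \frac{k^n}{1-k}\, d(x_1,x_0)\to 0$, so $\{x_n\}$ is Cauchy.

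By completeness of $(X,d)$, the sequence converges to some $x^{*}\in X$. To check that $x^{*}$ is fixed, I would bound $d(x^{*},Tx^{*})$ via the triangle inequality through $x_{n+1}$ and apply the contractive condition to $d(Tx_n,Tx^{*})$. After collecting the $d(x^{*},Tx^{*})$ term, this produces
\[
(1-\beta)\,d(x^{*},Tx^{*})\leq d(x^{*},x_{n+1})+\alpha\, d(x_n,x_{n+1})+\gamma\, d(x_n,x^{*}),
\]
whose right-hand side tends to $0$ since $x_n\to x^{*}$ and $d(x_n,x_{n+1})\to 0$. As $\beta<1$, we conclude $d(x^{*},Tx^{*})=0$, i.e. $Tx^{*}=x^{*}$. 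For uniqueness, supposing $x^{*}$ and $y^{*}$ are both fixed points, the hypothesis collapses to $d(x^{*},y^{*})\leq\gamma\, d(x^{*},y^{*})$, which forces $d(x^{*},y^{*})=0$ because $\gamma<1$.

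I expect the main obstacle to be organizational rather than conceptual: unlike a Banach contraction, the inequality here mixes the displacements $d(x,Tx)$ and $d(y,Ty)$, so one must first rearrange it to isolate the consecutive-iterate distance and absorb the $\alpha\, d(x_n,x_{n+1})$ term onto the left-hand side. Once the effective ratio $k=(\beta+\gamma)/(1-\alpha)$ is shown to lie below $1$, the remainder of the proof is the routine Cauchy-sequence and completeness machinery, and care is only needed to ensure that the coefficients $1-\alpha$ and $1-\beta$ appearing as denominators are genuinely positive, which again follows from $\alpha+\beta+\gamma<1$.
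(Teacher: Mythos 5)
Your proposal is correct, and it is essentially the same argument the paper itself uses when proving its functional analogue of this result (the modified $\mathfrak{D}$-contraction theorem): Picard iteration, rearranging the contractive inequality to get a ratio below $1$ (you obtain $(\beta+\gamma)/(1-\alpha)$ by taking $y=x_{n-1}$, the paper the symmetric $(\alpha+\gamma)/(1-\beta)$), a geometric-series Cauchy estimate, a triangle-inequality argument for $d(x^*,Tx^*)$ that avoids assuming continuity of $T$, and uniqueness via $(1-\gamma)d(x^*,y^*)\leq 0$. No gaps; the positivity of the denominators $1-\alpha$ and $1-\beta$ is correctly justified from $\alpha+\beta+\gamma<1$.
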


In 2012, Samet \textit{et. al.}{\cite{sam}} obtained some fixed point results by defining $\alpha-\psi$ contractive mapping as follows:

\begin{definition}{\cite{sam}}
 Let $\Psi$ be the family of all functions $\psi: [0,+\infty)\rightarrow[0,+\infty)$ satisfying the following properties:
\begin{eqnarray*}
&(1)&{\sum^{+\infty}_{n=1}}\psi^n(t)<+\infty~for~every~~t>0,~~where~\psi^n~is~~n^{th}~iterate~of~\psi ~~;\hspace{4cm}\\
&(2)&\psi~~is~~nondecreasing.
\end{eqnarray*}
\end{definition}

\begin{lemma}{\cite{sam}}
 If $\psi: [0,+\infty)\rightarrow[0,+\infty)$ is a nondecreasing function, then for each $t> 0, ~~\lim_{n\rightarrow +\infty}{\psi^n(t)}=0$ implies $\psi(t)< t$.
\end{lemma}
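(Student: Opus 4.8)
The plan is to argue by contradiction. Suppose, contrary to the claim, that for some fixed $t > 0$ we have $\psi(t) \geq t$ while nonetheless $\lim_{n\rightarrow +\infty}\psi^n(t) = 0$. The goal is to show that the single assumption $\psi(t) \geq t$ forces \emph{every} iterate to remain bounded below by $t$, which is plainly incompatible with convergence of the iterates to $0$.

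First I would establish, by induction on $n$, that $\psi^n(t) \geq t$ for every $n \geq 1$. The base case is exactly the standing assumption $\psi^1(t) = \psi(t) \geq t$. For the inductive step, assuming $\psi^n(t) \geq t$, I would apply $\psi$ to this inequality and invoke the nondecreasing hypothesis to get $\psi^{n+1}(t) = \psi(\psi^n(t)) \geq \psi(t)$, and then chain with $\psi(t) \geq t$ to conclude $\psi^{n+1}(t) \geq t$. This is the heart of the argument: monotonicity is precisely what lets the inequality $\psi(t) \geq t$ be propagated through the iteration scheme, since it guarantees that feeding a larger argument into $\psi$ produces a larger value.

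Having secured the uniform lower bound $\psi^n(t) \geq t$ for all $n$, I would pass to the limit. Because $t > 0$ is fixed and independent of $n$, letting $n \rightarrow +\infty$ yields $\lim_{n\rightarrow +\infty}\psi^n(t) \geq t > 0$, which directly contradicts the hypothesis $\lim_{n\rightarrow +\infty}\psi^n(t) = 0$. This contradiction excludes $\psi(t) \geq t$ and hence forces $\psi(t) < t$, as required. The argument is short, so the only step deserving real attention is orienting the induction correctly, namely applying $\psi$ to $\psi^n(t) \geq t$ rather than the reverse; notably, neither continuity of $\psi$ nor the summability property (1) of the class $\Psi$ is needed, as the nondecreasing hypothesis alone suffices.
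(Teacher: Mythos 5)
Your proof is correct: the induction $\psi^{n+1}(t) = \psi(\psi^n(t)) \geq \psi(t) \geq t$, using only the nondecreasing hypothesis, yields the uniform lower bound $\psi^n(t) \geq t$, which is incompatible with $\lim_{n\rightarrow+\infty}\psi^n(t)=0$ when $t>0$. The paper itself states this lemma without proof, citing Samet \emph{et al.}, and your contradiction argument is precisely the standard one given in that source, so there is nothing to add.
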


\begin{lemma}{\cite{sam}}
 If $\psi\in \Psi$, then the function $\psi$ is continuous at $0$.
 \end{lemma}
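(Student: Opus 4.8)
The plan is to reduce continuity at $0$ to the pointwise inequality $\psi(t)<t$ for $t>0$, which the preceding lemma almost hands us for free. First I would exploit condition (1) in the definition of $\Psi$: convergence of the series $\sum_{n=1}^{+\infty}\psi^{n}(t)$ for a fixed $t>0$ forces its general term to vanish, so that $\lim_{n\to+\infty}\psi^{n}(t)=0$ for every $t>0$. Since condition (2) guarantees that $\psi$ is nondecreasing, the hypotheses of the previous lemma are met for each such $t$, and it yields $\psi(t)<t$ for every $t>0$. Together with the codomain constraint $\psi\geq 0$, this produces the two-sided bound $0\leq\psi(t)<t$ for all $t>0$, which will be the engine of the whole argument.

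Next I would pin down the value of $\psi$ at the endpoint. By monotonicity, for every $t>0$ we have $\psi(0)\leq\psi(t)<t$; letting $t\to 0^{+}$ forces $\psi(0)\leq 0$, and because $\psi$ takes values in $[0,+\infty)$ we conclude $\psi(0)=0$. With this in hand, continuity at $0$ (one-sided, since $0$ is the left endpoint of the domain) follows by a squeeze: from $0\leq\psi(t)<t$ we obtain $\lim_{t\to 0^{+}}\psi(t)=0=\psi(0)$, which is exactly the assertion.

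I do not anticipate a genuine obstacle here, since everything rests on the earlier lemma plus the term test for convergent series. The one point deserving a line of care is the evaluation $\psi(0)=0$: it is tempting to quote $\psi(t)<t$ at $t=0$, but that inequality is only established for $t>0$, so the value at the endpoint must instead be extracted from monotonicity and the limit as $t\to 0^{+}$. Once $\psi(0)=0$ is secured, the squeeze argument closes the proof immediately.
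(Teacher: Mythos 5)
Your proof is correct. The paper states this lemma without proof (it is imported from Samet, Vetro and Vetro), and your argument --- the term test on $\sum_{n}\psi^{n}(t)$ to obtain $\psi^{n}(t)\to 0$, the preceding lemma to get $\psi(t)<t$ for $t>0$, monotonicity to pin down $\psi(0)=0$ (correctly avoiding the illegitimate evaluation of $\psi(t)<t$ at $t=0$), and the squeeze $0\leq\psi(t)<t$ --- is precisely the standard route taken in the original source.
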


\begin{definition}{\cite{sam}}
Let $(X,d)$ be a metric space and $T$ be a given self mapping defined on $X$. The mapping $T$ is said to be an $\alpha-\psi$ contractive mapping if there exists two functions $\alpha: X\times X\rightarrow [0,+\infty)$ and $\psi\in \Psi$ satisfying
\begin{eqnarray*}
\alpha(x,y) d(Tx,Ty)\leq \psi(d(x,y))~~for~~all~~x,y \in X.
\end{eqnarray*}
\end{definition}

\begin{definition}{\cite{sam}}
Let $T: X\rightarrow X$ and $\alpha: X\times X\rightarrow [0,+\infty)$. The mapping $T$ is known as $\alpha$-admissible mapping if
\begin{eqnarray*}
\alpha(x,y)\geq 1 \Rightarrow \alpha(Tx,Ty)\geq 1~for~~every~~x,y \in X.\\
\end{eqnarray*}
\end{definition}

\section{Main results}

This section presents some fixed function theorems using the notions of fixed function and $\mathfrak{D}$-Contraction.

 \begin{definition}
\textbf{Fixed function}: Let $\mathfrak{D}$ be any self mapping defined on a family of functions $\mathfrak{F}$, then $f\in \mathfrak{F}$ is said to be fixed function of $\mathfrak{D}$ if $\mathfrak{D}f=f$.
\end{definition}

\begin{example}
Let $U=[1,2]$ and the mapping $\mathfrak{D}$ be defined as $\mathfrak{D}f(u)=f^2(u)-2f(u)+2$ for all $f\in \mathfrak{F}$ and $u\in U$. Then $f(u)=2$ for all $u\in U$ and $f(u)=1$ for all $u\in U$ are two fixed functions of $\mathfrak{D}$.
\end{example}

\begin{example}
Let $U=\mathbb{R}^+$(set of positive real numbers) and $\mathfrak{D}$ be the self mapping on $\mathfrak{F}$. Let $f\in \mathfrak{F}$ be a function defined on $U$ as
\begin{equation*}
f(u)=\begin{cases}-1&~~~0\leq u\leq 1 \\
~~0&~~~otherwise.\end{cases}
\end{equation*}
Then $f^3$ is a fixed function of $\mathfrak{D}$.
\end{example}

\begin{definition}
Let $(U,\hat{d})$ be a complete metric space and let $\mathfrak{F}$ be the collection of all bounded functions defined on $U$. Let $\mathfrak{D}$ be any self mapping on $\mathfrak{F}$. Then the given mapping is called $\mathfrak{D}$-contraction mapping on $\mathfrak{F}$, if for any real number $\lambda\in[0,1)$, we have
\begin{equation*}
d^*(\mathfrak{D}f,\mathfrak{D}g)\leq\lambda
d^*(f,g)~~for~~all~~f,g\in\mathfrak{F}
\end{equation*}
 where
\begin{equation}\label{e22}
 d^*(f,g)= sup \{\hat{d}(f(u),g(v))|~u,v\in U\}= sup \{|f(u)-g(v)|~|u,v\in U\}.
 \end{equation}
\end{definition}

\begin{remark}
Clearly, $d^*$ is  a metric on $\mathfrak{F}$ as $d^*(f,g)=0\Leftrightarrow f \sim g$ for all $f,g\in\mathfrak{F}$. Also, for all $u,v \in U$ and $f,g,h \in \mathfrak{F}$,
\begin{eqnarray*}
|f(u)-g(v)|&\leq&|f(u)-h(w)|+|h(w)-g(v)|\hspace{3cm}\\
           &\leq& \sup\{|f(u)-h(w)|~|u,w\in U\}\\
           &&+\sup\{|h(w)-g(v)|~|w,v\in U\}\\
\Rightarrow \sup\{|f(u)-g(v)|~|u,v\in U\}&\leq& \sup\{|f(u)-h(w)|~|u,w\in U\}\\
               &&+\sup\{|h(w)-g(v)|~|w,v\in U\}\\
\Rightarrow d^*(f,g)&\leq& d^*(f,h)+d^*(h,g).\\
\end{eqnarray*}
\end{remark}

Now we prove the main result.

 \begin{theorem}\label{t26}
 Let $(U,\hat{d})$ be a complete metric space with metric $\hat{d}$ defined as $\hat{d}(u,v)=|u-v|$ for all $u,v \in U$. Let $\mathfrak{F}$ be the collection of all bounded functions $f$ defined on $U$ with metric $d^*$(as defined in (\ref{e22})).\\
Also, let $\mathfrak{D}$ be the $\mathfrak{D}$-contraction mapping defined on $\mathfrak{F}$. Then there exists a unique fixed function $f\in\mathfrak{F}$ $\textit{i.e.}$ there exists some $f\in\mathfrak{F}$ such that $\mathfrak{D}f=f$.\\
\end{theorem}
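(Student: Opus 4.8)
The plan is to reproduce the classical Picard iteration argument of Banach's theorem, but inside the function space $(\mathfrak{F}, d^*)$ rather than in $(U,\hat{d})$. First I would fix an arbitrary starting function $f_0 \in \mathfrak{F}$ and define the sequence of iterates $f_{n+1} = \mathfrak{D} f_n$ for $n \geq 0$, so that $f_n = \mathfrak{D}^n f_0$. The goal is then threefold: to show that this sequence is Cauchy with respect to $d^*$, that its $d^*$-limit is the desired fixed function, and that no two distinct fixed functions can coexist.

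For the Cauchy property, I would first apply the $\mathfrak{D}$-contraction inequality repeatedly to control successive terms:
\begin{equation*}
d^*(f_{n+1}, f_n) = d^*(\mathfrak{D} f_n, \mathfrak{D} f_{n-1}) \leq \lambda\, d^*(f_n, f_{n-1}) \leq \cdots \leq \lambda^n d^*(f_1, f_0).
\end{equation*}
Then, for $m > n$, I would invoke the triangle inequality for $d^*$ (established in the preceding Remark) to telescope,
\begin{equation*}
d^*(f_m, f_n) \leq \sum_{k=n}^{m-1} d^*(f_{k+1}, f_k) \leq \left(\sum_{k=n}^{m-1} \lambda^k\right) d^*(f_1, f_0) \leq \frac{\lambda^n}{1-\lambda}\, d^*(f_1, f_0).
\end{equation*}
Since $0 \leq \lambda < 1$, the right-hand side tends to $0$ as $n \to \infty$, so $(f_n)$ is a $d^*$-Cauchy sequence.

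The step I expect to be the main obstacle is completeness: to conclude that $(f_n)$ converges in $\mathfrak{F}$, one must know that $(\mathfrak{F}, d^*)$ is itself complete. I would establish this by showing that a $d^*$-Cauchy sequence of bounded functions is uniformly Cauchy (taking $u=v$ in the supremum defining $d^*$ already recovers the sup-distance, while the full supremum additionally controls the oscillation of each iterate), so that it converges uniformly to a function $f$ on $U$ which is again bounded; this places $f \in \mathfrak{F}$ and yields $d^*(f_n, f) \to 0$. Granting this, I would verify that $f$ is fixed by the standard estimate
\begin{equation*}
d^*(\mathfrak{D} f, f) \leq d^*(\mathfrak{D} f, \mathfrak{D} f_n) + d^*(f_{n+1}, f) \leq \lambda\, d^*(f, f_n) + d^*(f_{n+1}, f),
\end{equation*}
whose right-hand side vanishes as $n \to \infty$; hence $d^*(\mathfrak{D} f, f) = 0$ and therefore $\mathfrak{D} f = f$.

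Finally, for uniqueness I would suppose $f$ and $g$ are both fixed functions and apply the contraction directly: $d^*(f,g) = d^*(\mathfrak{D} f, \mathfrak{D} g) \leq \lambda\, d^*(f,g)$, which forces $(1-\lambda)\, d^*(f,g) \leq 0$ and hence $d^*(f,g) = 0$, so $f$ and $g$ coincide. The only genuinely delicate points in the whole scheme are the completeness of $(\mathfrak{F}, d^*)$ and the interpretation of "$d^*(f,g)=0$" as identification (modulo the equivalence $\sim$ noted in the Remark); everything else is a direct transcription of the Banach iteration argument into the setting of bounded functions.
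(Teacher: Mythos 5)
Your proposal is correct and follows essentially the same route as the paper: Picard iteration $f_{n+1}=\mathfrak{D}f_n$, the telescoped geometric estimate $d^*(f_m,f_n)\leq \frac{\lambda^n}{1-\lambda}\,d^*(f_1,f_0)$, completeness of $(\mathfrak{F},d^*)$ to extract a limit, and one more application of the contraction for uniqueness. The only (minor, and welcome) deviations are that you sketch a proof of completeness of $(\mathfrak{F},d^*)$ where the paper merely asserts it, and that you derive $\mathfrak{D}f=f$ from the direct estimate $d^*(\mathfrak{D}f,f)\leq \lambda\, d^*(f,f_n)+d^*(f_{n+1},f)$ rather than from the paper's appeal to continuity of $\mathfrak{D}$ together with uniqueness of limits; both are routine variants of the same argument.
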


\begin{proof} Let $f,g$ be any two functions from the family $\mathfrak{F}$. Since $\mathfrak{D}$ is the $\mathfrak{D}$-contraction mapping on $\mathfrak{F}$, therefore, there exists a real number $\lambda\in[0,1)$ such that
\begin{equation*}
d^*(\mathfrak{D}f,\mathfrak{D}g)\leq\lambda d^*(f,g)~~ for~~ all~~
f,g\in\mathfrak{F}\hspace{1cm}
\end{equation*}
where
\begin{equation*}
 d^*(f,g)= sup \{\hat{d}(f(u),g(v))|~u,v\in U\}.\hspace{0.8cm}
\end{equation*}
This further implies that
\begin{eqnarray*}
 d^*(\mathfrak{D^2}f,\mathfrak{D^2}g)&\leq&\lambda d^*(\mathfrak{D}f,\mathfrak{D}g)\\
                                      &\leq&\lambda^2 d^*(f,g)~~ for~~ all~~ f,g\in\mathfrak{F}.
\end{eqnarray*}
Continuing in the same manner, we get\\
\begin{equation}\label{e23}
d^*(\mathfrak{D^n}f,\mathfrak{D^n}g)\leq\lambda^n d^*(f,g)~ for~~
all ~~f,g\in\mathfrak{F}.
\end{equation}
{\textit{\textbf{Step I}: We will show that $\{f_{n}\}_{(n\in \mathbb{N})}$ is a cauchy sequence.}}\\
 Let $f_0$ be any function in $\mathfrak{F}$. Let us define the sequence $\{f_{n}\}_{(n\in \mathbb{N})}$ by setting
\begin{equation*}
f_{1}=\mathfrak{D}(f_{0}),
\end{equation*}
\begin{equation*}
f_{2}=\mathfrak{D}(f_{1})=\mathfrak{D^2}(f_{0}),
\end{equation*}
\begin{eqnarray*}
&&.~~~~.~~~~.~~\\
&&.~~~.~~~~.
\end{eqnarray*}
\begin{equation*}
f_{n}=\mathfrak{D}(f_{n-1})=\mathfrak{D}^{2}(f_{n-2})=...=\mathfrak{D^n}(f_{0}).
\end{equation*}
Let $p,q\in N$ be some positive integers with $p>q$. Let $p=q+t$ where $t$ is a +ve integer greater than equal to 1.
\begin{eqnarray*}
~~d^*(f_{q},f_{p})&=& d^*(f_{q},f_{q+t})\\
 &\leq& d^*(f_{q},f_{q+1})+d^*(f_{q+1},f_{q+2})+...+d^*(f_{q+t-1},f_{q+t})\\
 &=&d^*(\mathfrak{D^q}f_{0},\mathfrak{D^q}f_{1})+d^*(\mathfrak{D^{q+1}}f_{0},\mathfrak{D^{q+1}}f_{1})+...\\
 &&+d^*(\mathfrak{D^{q+t-1}}f_{0},\mathfrak{D^{q+t-1}}f_{1})\\
  &\leq& \lambda^q d^*(f_{0},f_{1})+\lambda^{q+1} d^*(f_{0},f_{1})+....\\
  &&+\lambda^{q+t-1} d^*(f_{0},f_{1})~~~\hspace{4cm}(using (\ref{e23}))\\
   &=&\lambda^q d^*(f_{0},f_{1}).[1+\lambda+\lambda^2+...+\lambda^{t-1}]\\
   &\leq& \frac{\lambda^q}{1-\lambda}d^*(f_{0},f_{1})~~where ~~\lambda<1.\\
\end{eqnarray*}
 Since $\mathfrak{F}$ is a family of bounded functions, therefore
\begin{equation*}
d^*(f_{q},f_{p})\rightarrow 0 ~~as~~~ p~,~q\rightarrow\infty.
\end{equation*}
Hence, $\{f_{n}\}_{(n\in \mathbb{N})}$ is a Cauchy sequence in $\mathfrak{F}$.\\

{\textit{\textbf{Step II}: Existence of fixed function}}.\\
As $\mathfrak{F}$ is the family of bounded functions defined on complete metric space $(U,\hat{d})$, therefore, $(\mathfrak{F},d^*)$ is a complete metric space and thus the sequence $\{f_{n}\}_{(n\in \mathbb{N})}$ is convergent in $\mathfrak{F}$.\\

Let $f\in\mathfrak{F}$ be the limit of $\{f_{n}\}_{(n\in \mathbb{N})}$ \textit{i.e} $\lim_{n\rightarrow\infty}~~f_{n}=f$.\\
  By the continuity of $\mathfrak{D}$, we get
\begin{equation*}
 \lim_{n\rightarrow\infty}~~ \mathfrak{D}f_{n}=\mathfrak{D}f.
\end{equation*}
Also,
\begin{equation*}
\mathfrak{D}f_{n}=f_{n+1}\rightarrow f~~ as
~~n\rightarrow\infty.\hspace{1cm}
\end{equation*}
Thus, uniqueness of limit implies that $\mathfrak{D}f=f$. This shows that $f$ is a fixed function of $\mathfrak{D}$.\\

{\textit{\textbf{Step III}: Uniqueness of fixed function.}}\\
Let $g$ be another fixed function of $\mathfrak{D}$ $\textit{i.e}$~~ $\mathfrak{D}g=g$ and $f\nsim g$.
\begin{eqnarray*}
 0\leq d^*(f,g)&=&d^*(\mathfrak{D}f,\mathfrak{D}g)\\
                   &\leq& \lambda d^*(f,g)\\
                 &<&d^*(f,g).
\end{eqnarray*}
 Thus, we arrive at a contradiction. Hence, $f$ is a unique fixed function of $\mathfrak{D}$.\\
 \end{proof}

\begin{example}
Let $U=\mathbb{R}$(set of real numbers) and $\hat{d}$ be the metric defined on $\mathbb{R}$. Clearly, $(U, \hat{d})$ is a complete metric space. Let $\mathfrak{F}$ be the family of bounded functions defined on $U$ and $d^*$ be the metric on $\mathfrak{F}$ defined as
 $$
 d^*(f,g)= sup \{\hat{d}(f(u),g(v))|~u,v\in U\}= sup \{|f(u)-g(v)|~|u,v\in U\}.
 $$
 It can be easily seen that $(\mathfrak{F}, d^*)$ is a complete metric space being the family of bounded functions defined on complete metric space $(U,\hat{d})$.\\

 Let
\begin{eqnarray*}
 f(u)&=&\begin{cases}1&~~u~is~~rational \\0&~~~u~~is~~irrational\end{cases}
 \end{eqnarray*}
and
\begin{eqnarray*}
g(u)&=&\begin{cases}-1&~~u~is~~rational
\\~~~0&~~~u~~is~~irrational.\end{cases}
\end{eqnarray*}
Let the mapping $\mathfrak{D}$ be defined as $\mathfrak{D}f= f^2$ for all $f\in \mathfrak{F}$.\\
Then, we only need to show that the mapping $\mathfrak{D}$ is a $\mathfrak{D}$-contraction mapping.\\
For this, we have
\begin{eqnarray*}
d^*(\mathfrak{D}f,\mathfrak{D}g)&=&sup\{~\hat{d}(\mathfrak{D}f(u),\mathfrak{D}g(v))~|u,v\in U\}\\
                                &=&sup\{|f^2(u)-g^2(v)|~|u,v\in U\}\\
&\leq& \lambda ~~sup\{|f(u)-g(v)|~|u,v\in U\}~~where~~0\leq \lambda < 1\\
\Rightarrow d^*(\mathfrak{D}f,\mathfrak{D}g)&\leq&\lambda d^*(f,g).
\end{eqnarray*}
Since all the conditions required for Theorem \ref{t26}  are fulfilled, therefore, there exists a unique fixed function of $\mathfrak{D}$. In this example, $f^2,~f^4,~f^6$ etc. yield same fixed function of
$\mathfrak{D}$.\\
\end{example}

\begin{example}
Let $U=[0,1]$ and $\hat{d}$ be the metric defined on $U$. Let $\mathfrak{F}=C[0,1]$ (\textit{i.e} set of all real valued continuous functions defined on [0,1]) and the mapping $\mathfrak{D}:\mathfrak{F}\rightarrow\mathfrak{F}$ be defined as
\begin{equation*}
\mathfrak{D}f(u)=\frac{2}{3}f(u) ~~~for~~~ all~~~ f\in\mathfrak{F}~~
and~~ u\in[0,1].
\end{equation*}
Here, $(U,\hat{d})$  is a complete metric space and $\mathfrak{F}=C[0,1]$ is the collection of all real valued continuous(and hence bounded) functions defined on $U=[0,1]$.\\

Let $f_{n}(u)=\frac{u^n}{n}$ for all $u\in[0,1]$.\\

Then $\{f_{n}(u)\}_{\left(u\in[0,1]\right)}$ is a uniformly convergent sequence in $\mathfrak{F}$ and therefore is a cauchy sequence.\\
Also, the given mapping is a $\mathfrak{D}$-contraction mapping as
\begin{eqnarray*}
d^*(\mathfrak{D}f,\mathfrak{D}g)&=&d^*\left(\frac{2}{3}f,\frac{2}{3}g\right)\\
                               &=&sup \left\{\hat{d}\left(\frac{2}{3}f(u),\frac{2}{3}g(v)\right)~|u,v\in U \right\}\\
                                &=& \frac{2}{3} sup\{~\hat{d}(f(u),g(v))~|u,v\in U\}\\
                               &<& \lambda d^*(f,g)~~for~~\frac{2}{3}< \lambda < 1.
\end{eqnarray*}
Since all the conditions required for Theorem \ref{t26} are fulfilled, therefore, there exists a unique fixed function of $\mathfrak{D}$. In this example, null function is a unique fixed function.\\
\end{example}

\begin{theorem}
 Let $(U,\hat{d})$ be a complete metric space (where $\hat{d}$ is the metric as defined earlier) and $\mathfrak{F}$ be the collection of all bounded functions $f$ defined on $U$ with metric $d^*$(as defined in (\ref{e22})).\\
Also, let $\mathfrak{D}$ be the modified $\mathfrak{D}$-contraction mapping on $\mathfrak{F}$ satisfying
\begin{equation*}
d^*(\mathfrak{D}f,\mathfrak{D}g)\leq \alpha d^*(f,\mathfrak{D}f)+
\beta d^*(g,\mathfrak{D}g)+\gamma d^*(f,g)
\end{equation*}
for all $f,g\in \mathfrak{F}$; $\alpha,\beta,\gamma$ non negative with $\alpha+\beta+\gamma<1$. Then  $\mathfrak{D}$ has a unique fixed function.
\end{theorem}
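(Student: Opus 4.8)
The plan is to reproduce the three-step scheme used in the proof of Theorem~\ref{t26}, replacing the plain contraction estimate by the Reich-type inequality. Since the earlier remarks and examples establish that $(\mathfrak{F},d^*)$ is itself a complete metric space, the statement is exactly the analogue of Reich's theorem \cite{rei} in this particular space, so in principle one could simply invoke \cite{rei}; I would nevertheless give the self-contained argument. Fix any $f_0\in\mathfrak{F}$ and define the Picard iterates $f_n=\mathfrak{D}^n(f_0)$, so that $f_{n+1}=\mathfrak{D}f_n$ for all $n$.

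The crucial algebraic step, and the one place where the Reich condition behaves differently from the simple contraction, is to extract a genuine contraction factor for consecutive iterates. Applying the hypothesis with $f=f_n$, $g=f_{n-1}$ and using $\mathfrak{D}f_n=f_{n+1}$, $\mathfrak{D}f_{n-1}=f_n$, I would obtain
\begin{equation*}
d^*(f_{n+1},f_n)\leq \alpha\, d^*(f_n,f_{n+1})+\beta\, d^*(f_{n-1},f_n)+\gamma\, d^*(f_n,f_{n-1}),
\end{equation*}
and then solve for $d^*(f_{n+1},f_n)$. Because $\alpha<1$ I may divide by $1-\alpha>0$ to get $d^*(f_{n+1},f_n)\leq k\, d^*(f_n,f_{n-1})$ with $k=\frac{\beta+\gamma}{1-\alpha}$; the hypothesis $\alpha+\beta+\gamma<1$ forces $\beta+\gamma<1-\alpha$, so $k\in[0,1)$. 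Iterating gives $d^*(f_{n+1},f_n)\leq k^n d^*(f_1,f_0)$, and then, exactly as in Step~I of Theorem~\ref{t26}, the triangle inequality together with the geometric bound $\sum_n k^n<\infty$ shows that $\{f_n\}$ is Cauchy. Completeness of $(\mathfrak{F},d^*)$ yields a limit $f\in\mathfrak{F}$.

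It remains to prove $\mathfrak{D}f=f$ and uniqueness. Rather than assume continuity of $\mathfrak{D}$ (as was done in Theorem~\ref{t26}), I would estimate directly: combining the triangle inequality with the hypothesis applied to the pair $(f_n,f)$ gives
\begin{equation*}
d^*(f,\mathfrak{D}f)\leq d^*(f,f_{n+1})+\alpha\, d^*(f_n,f_{n+1})+\beta\, d^*(f,\mathfrak{D}f)+\gamma\, d^*(f_n,f).
\end{equation*}
Since $\beta<1$, rearranging bounds $(1-\beta)\,d^*(f,\mathfrak{D}f)$ by a sum of three terms each tending to $0$ as $n\to\infty$, which forces $d^*(f,\mathfrak{D}f)=0$, i.e.\ $f\sim\mathfrak{D}f$, so $f$ is a fixed function. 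For uniqueness, if $g$ is another fixed function, then applying the hypothesis to $(f,g)$ annihilates the first two terms and leaves $d^*(f,g)\leq\gamma\, d^*(f,g)$ with $\gamma<1$, whence $d^*(f,g)=0$. The main obstacle is entirely concentrated in the first estimate: one must be careful to rewrite $d^*(f_n,\mathfrak{D}f_n)$ as the consecutive distance $d^*(f_n,f_{n+1})$ so that it can be collected on the left-hand side and the factor $k$ isolated; once that is done, the Cauchy, existence, and uniqueness arguments run parallel to those of Theorem~\ref{t26} and are routine.
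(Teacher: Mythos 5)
Your proposal is correct and takes essentially the same route as the paper's proof: the same Picard iteration with a Reich-type estimate on consecutive terms yielding a geometric Cauchy bound, the same direct estimate on $d^*(f,\mathfrak{D}f)$ via the triangle inequality (the paper likewise avoids assuming continuity of $\mathfrak{D}$ in this theorem), and the identical uniqueness argument giving $(1-\gamma)d^*(f,g)\leq 0$. The only difference is cosmetic: you feed the pair into the hypothesis in the order $(f_n,f_{n-1})$ and obtain the factor $k=\frac{\beta+\gamma}{1-\alpha}$, whereas the paper uses the order $(f_{n-1},f_n)$ and gets $\frac{\alpha+\gamma}{1-\beta}$; both lie in $[0,1)$ under $\alpha+\beta+\gamma<1$.
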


\begin{proof}
Let us define a sequence $\{f_{n}\}_{(n\in \mathbb{N})}$ of functions of $\mathfrak{F}$ in the following way:\\
Let $f_0\in \mathfrak{F}$ be any arbitrary function and $f_n=\mathfrak{D}f_{n-1}=\mathfrak{D}^n{f_0}$.\\

{\textit{\textbf{Step I}: $\{f_{n}\}_{(n\in \mathbb{N})}$ is a cauchy sequence in $\mathfrak{F}$.}}
\begin{eqnarray*}
d^*(f_1,f_2)&=&d^*(\mathfrak{D}f_0,\mathfrak{D}f_1)\\
                  &\leq& \alpha d^*(f_0,\mathfrak{D}f_0)+ \beta d^*(f_1,\mathfrak{D}f_1)+\gamma d^*(f_0,f_1)\\
                  &=&\alpha d^*(f_0,f_1)+ \beta d^*(f_1,f_2)+\gamma d^*(f_0,f_1)\\
                  &=&(\alpha+\gamma)d^*(f_0,f_1)+\beta d^*(f_1,f_2)
\end{eqnarray*}
\begin{eqnarray*}
&\Rightarrow& (1-\beta)d^*(f_1,f_2)\leq (\alpha+\gamma)d^*(f_0,f_1)\hspace{4cm}\\
&\Rightarrow& d^*(f_1,f_2)\leq
\left(\frac{\alpha+\gamma}{1-\beta}\right)d^*(f_0,f_1)\hspace{2cm}(where~~~\beta< 1)
\end{eqnarray*}
Similarly
\begin{eqnarray*}
d^*(f_2,f_3)&\leq& \left(\frac{\alpha+\gamma}{1-\beta}\right)d^*(f_1,f_2)\hspace{3cm}\\
                  &\leq& \left(\frac{\alpha+\gamma}{1-\beta}\right)^{2}d^*(f_0,f_1)
\end{eqnarray*}
and so on.\\

As $\left(\frac{\alpha+\gamma}{1-\beta}\right)<1$ and $f_0,f_1\in \mathfrak{F}$ are bounded, therefore, $\{f_{n}\}_{(n\in \mathbb{N})}$ is a cauchy sequence in $\mathfrak{F}$.\\
Since $\mathfrak{F}$ is complete being the family of bounded functions defined on complete metric space $(U,\hat{d})$, therefore, the sequence $\{f_{n}\}_{(n\in \mathbb{N})}$ is convergent in $\mathfrak{F}$(say it converges to $f\in \mathfrak{F}$).\\

{\textit{\textbf{Step II}: Existence of fixed function.}}\\
Now it will be shown that $f$ is a fixed function of $\mathfrak{D}$.
Let $s$ be any arbitrary +ve integer.
\begin{eqnarray*}
 d^*(f,\mathfrak{D}f)&\leq& d^*(f,f_s)+d^*(f_s,\mathfrak{D}f)\hspace{5cm}\\
                     &=& d^*(f,f_s)+d^*(\mathfrak{D}f_{s-1},\mathfrak{D}f)\\
                     &=& d^*(f,f_s)+d^*(\mathfrak{D}f,\mathfrak{D}f_{s-1})
\end{eqnarray*}
\begin{equation*}
\Rightarrow d^*(f,\mathfrak{D}f)\leq d^*(f,f_s)+\alpha
d^*(f,\mathfrak{D}f)+\beta d^*(f_{s-1},\mathfrak{D}f_{s-1})+\gamma
d^*(f,f_{s-1})
\end{equation*}
\begin{equation*}
\Rightarrow (1-\alpha)d^*(f,\mathfrak{D}f)\leq d^*(f,f_s)+\beta
d^*(f_{s-1},\mathfrak{D}f_{s-1})+\gamma d^*(f,f_{s-1})\hspace{1.1cm}
\end{equation*}
The right side expression can be made arbitrarily small enough by taking $s$ sufficiently large. Thus
\begin{eqnarray*}
 0~~\leq~~ d^*(f,\mathfrak{D}f)&<& \epsilon\\
\Rightarrow
d^*(f,\mathfrak{D}f)&=&0~~\hspace{0.2cm}\textit{i.e}~~f~is~a~fixed~function~of~\mathfrak{D}.
\end{eqnarray*}

{\textit{\textbf{Step III}: Uniqueness of fixed function.}}\\
Suppose $g\in \mathfrak{F}$ be another fixed function of $\mathfrak{D}$  \textit{i.e} $\mathfrak{D}g=g$ and $g\nsim f$.\\
Then
\begin{eqnarray*}
d^*(f,g)&=&d^*(\mathfrak{D}f,\mathfrak{D}g)\hspace{2cm}\\
         &\leq& \alpha d^*(f,\mathfrak{D}f)+\beta d^*(g,\mathfrak{D}g)+\gamma d^*(f,g)\\
\Rightarrow (1-\gamma)d^*(f,g)&\leq&0~~~~~~~(where~~\gamma< 1)\\
\Rightarrow d^*(f,g)&\leq&0
\end{eqnarray*}
which is a contradiction to our assumption. This implies that $f$ is unique.\\
\end{proof}

In this paper, we have extended the concept of $\alpha-\psi$ contractive mapping in the following manner:\\
\begin{definition}
 The mapping $\mathfrak{D}: \mathfrak{F}\rightarrow \mathfrak{F}$ is said to be an $\alpha-\psi$ contractive mapping if there exists two functions $\alpha: U\times U\rightarrow [0,+\infty)$ and $\psi\in \Psi$ satisfying
\begin{equation}\label{e24}
\alpha(f(u),g(v)) d^*(\mathfrak{D}f,\mathfrak{D}g)\leq
\psi(d^*(f,g))
\end{equation}
for all $f,g \in \mathfrak{F}$ and $u,v\in U$.
\end{definition}

\begin{definition}
Let $\mathfrak{D}: \mathfrak{F}\rightarrow \mathfrak{F}$ and $\alpha: U\times U\rightarrow [0,+\infty)$. The mapping $\mathfrak{D}$ is called an $\alpha$-admissible mapping if
\begin{equation*}
\alpha(f(u),g(v))\geq 1 \Rightarrow
\alpha(\mathfrak{D}f(u),\mathfrak{D}g(v))\geq 1
\end{equation*}
for every $f,g \in \mathfrak{F}$ and $u,v\in U$.\\
\end{definition}

\begin{theorem}\label{t3}
 Let $(U,\hat{d})$ be a complete metric space and $\mathfrak{F}$ be the collection of all bounded functions $f$ (defined on $U$) with metric $d^*$ (as defined in (\ref{e22})). Let $\mathfrak{D}:\mathfrak{F}\rightarrow \mathfrak{F}$ be an $\alpha-\psi$ contractive mapping. Also, suppose that
\begin{eqnarray*}
&(i)& \mathfrak{D}~~is~~{\alpha}-admissible.\hspace{10cm}\\
&(ii)& there~is~some~f_0\in \mathfrak{F}~for~which~~\alpha(f_0(u),\mathfrak{D}f_0(v))\geq 1~for~all~u,v\in U.\\
&(iii)& \mathfrak{D}~~is~~continuous.
\end{eqnarray*}
Then  $\mathfrak{D}$ possesses a fixed function in $\mathfrak{F}$.
\end{theorem}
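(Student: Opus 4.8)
The plan is to imitate the Samet--Vetro--Vetro scheme, now carried out in $(\mathfrak{F}, d^*)$. Take the function $f_0$ furnished by hypothesis (ii) and define the Picard orbit $f_n := \mathfrak{D}f_{n-1} = \mathfrak{D}^n f_0$. If $d^*(f_n, f_{n+1}) = 0$ for some $n$, then $f_n \sim \mathfrak{D}f_n$ and $f_n$ is already a fixed function, so throughout I may assume $d^*(f_n, f_{n+1}) > 0$, which is exactly what is needed to invoke the summability property of the class $\Psi$.

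First I would propagate the admissibility along the orbit. Hypothesis (ii) gives $\alpha(f_0(u), f_1(v)) \geq 1$ for all $u, v \in U$ (since $f_1 = \mathfrak{D}f_0$). Applying the $\alpha$-admissibility of (i) with $f = f_{n-1},\, g = f_n$ and inducting on $n$ yields $\alpha(f_n(u), f_{n+1}(v)) \geq 1$ for every $n$ and all $u, v \in U$. Next I would insert $f = f_{n-1},\, g = f_n$ into the contractive inequality \eqref{e24}. Because $d^*(\mathfrak{D}f_{n-1}, \mathfrak{D}f_n) = d^*(f_n, f_{n+1})$ is already a supremum (independent of the chosen $u,v$) and the multiplier $\alpha(f_{n-1}(u), f_n(v))$ is $\geq 1$, the $\alpha$-factor can only be dropped in our favour, giving
\[
d^*(f_n, f_{n+1}) \leq \alpha(f_{n-1}(u), f_n(v))\, d^*(f_n, f_{n+1}) \leq \psi\big(d^*(f_{n-1}, f_n)\big).
\]
Since $\psi$ is nondecreasing, iterating this estimate produces $d^*(f_n, f_{n+1}) \leq \psi^n\big(d^*(f_0, f_1)\big)$ for all $n$.

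With this estimate in hand, I would establish that $\{f_n\}$ is Cauchy. Using the triangle inequality for $d^*$ (the Remark following \eqref{e22}) and property (1) of $\Psi$, for $p > q$ one has
\[
d^*(f_q, f_p) \leq \sum_{k=q}^{p-1} d^*(f_k, f_{k+1}) \leq \sum_{k=q}^{\infty} \psi^k\big(d^*(f_0, f_1)\big),
\]
and the right-hand side, being the tail of the convergent series $\sum_k \psi^k(d^*(f_0,f_1))$, tends to $0$ as $q \to \infty$. Hence $\{f_n\}$ is Cauchy. Since $(U, \hat{d})$ is complete, $(\mathfrak{F}, d^*)$ is complete (exactly as used in Theorem \ref{t26}), so $f_n \to f$ for some $f \in \mathfrak{F}$. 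Finally, continuity of $\mathfrak{D}$ from (iii) gives $\mathfrak{D}f_n \to \mathfrak{D}f$, whereas $\mathfrak{D}f_n = f_{n+1} \to f$; by uniqueness of the limit, $\mathfrak{D}f = f$, so $f$ is the desired fixed function.

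The step that needs the most care is the admissibility propagation together with the removal of the $\alpha$-factor: one must confirm that hypothesis (ii) supplies $\alpha \geq 1$ for all pairs $(u,v)$ and that (i) preserves this uniformly along the entire orbit, so that the reduction $d^*(f_n, f_{n+1}) \leq \psi(d^*(f_{n-1}, f_n))$ is legitimate at every stage; the remainder is the routine $\psi$-summability Cauchy argument. Note that, as in the classical $\alpha$-$\psi$ theorem, only existence (not uniqueness) is claimed here, so no separate uniqueness step is required.
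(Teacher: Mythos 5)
Your proposal is correct and follows essentially the same route as the paper's own proof: the Picard orbit $f_n=\mathfrak{D}^n f_0$, induction via $\alpha$-admissibility to get $\alpha(f_n(u),f_{n+1}(v))\geq 1$, dropping the $\alpha$-factor to obtain $d^*(f_n,f_{n+1})\leq\psi^n(d^*(f_0,f_1))$, the $\psi$-summability Cauchy estimate, and completeness of $(\mathfrak{F},d^*)$ together with continuity of $\mathfrak{D}$ and uniqueness of limits. Your only departure is cosmetic--handling the degenerate case via $d^*(f_n,f_{n+1})=0$ (i.e.\ $f_n\sim\mathfrak{D}f_n$) rather than the paper's $f_n=f_{n+1}$, which is if anything slightly more careful given that $d^*(f,g)=0$ only forces $f\sim g$.
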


\begin{proof}
 Let $f_0\in \mathfrak{F}$ be a function such that
 \begin{equation*}
\alpha(f_0(u),\mathfrak{D}f_0(v))\geq 1~~ for~~all~~u,v \in U.
\end{equation*}
Define the sequence $\{f_{n}\}_{n\in \mathbb{N}}$ in $\mathfrak{F}$ by $f_{n+1}= \mathfrak{D}f_n$ for every $n\in \mathbb{N}$. If $f_n=f_{n+1}$ for some $n\in \mathbb{N}$, then $f_n$ is a fixed function of $\mathfrak{D}$. Let us assume that $f_n\neq f_{n+1}$ for every $n\in \mathbb{N}$.\\

As by condition $(i)$, $\mathfrak{D}$ is  ${\alpha}$-admissible, therefore for all $u,v\in U$, we have
\begin{eqnarray*}
\alpha(f_0(u),f_1(v))&=& \alpha(f_0(u),\mathfrak{D}f_0(v))\geq 1\hspace{2cm}\\
\Rightarrow\alpha(\mathfrak{D}f_0(u),\mathfrak{D}f_1(v))&=&
\alpha(f_1(u),f_2(v))\geq 1
\end{eqnarray*}
By mathematical induction, we get
\begin{equation}\label{e25}
\alpha(f_n(u),f_{n+1}(v))\geq 1~~for~~all~~n\in \mathbb{N}~~and~~u,v
\in U.
\end{equation}
Using (\ref{e24}) and (\ref{e25}),
\begin{eqnarray*}
d^*(f_n,f_{n+1})&=&d^*(\mathfrak{D}f_{n-1},\mathfrak{D}f_n)\\
  &\leq&\alpha(f_{n-1}(u),f_n(v))d^*(\mathfrak{D}f_{n-1},\mathfrak{D}f_n)\\
  &\leq&\psi(d^*(f_{n-1},f_n))
\end{eqnarray*}
Repetition of above process implies
\begin{eqnarray*}
d^*(f_n,f_{n+1})\leq \psi^n(d^*(f_0,f_1))~~for~~all~~n\in
\mathbb{N}.
\end{eqnarray*}
Let $n>m\geq N$ for $N \in \mathbb{N}$. Using triangular inequality,
we have
\begin{eqnarray*}
d^*(f_m,f_n)&\leq& d^*(f_m,f_{m+1})+d^*(f_{m+1},f_{m+2})+d^*(f_{m+2},f_{m+3})+\\
&&...+d^*(f_{n-1},f_n)\\
&\leq&\psi^m(d^*(f_0,f_1))+\psi^{m+1}(d^*(f_0,f_1))+...+\psi^{n-1} (d^*(f_0,f_1))\\
&=&\sum^{n-1}_{k=m}\psi^k(d^*(f_0,f_1)).
\end{eqnarray*}
As ${\sum^{+\infty}_{n=1}}\psi^n(u)<+\infty$ for each $u> 0$, so $\{f_{n}\}_{n\in \mathbb{N}}$ is a Cauchy sequence in $\mathfrak{F}$ and being collection of bounded functions defined on complete metric space $(U, \hat{d});~ (\mathfrak{F}, d^*)$ is itself a complete metric space. Therefore, there exists a function $f\in \mathfrak{F}$ such that
\begin{equation*}
f_n\rightarrow f~~as~~n\rightarrow +\infty.
\end{equation*}
As $\mathfrak{D}$ is a continuous mapping, therefore, we have
\begin{eqnarray*}
\mathfrak{D}f_{n}\rightarrow \mathfrak{D}f~~as~~n\rightarrow
+\infty~~\Rightarrow f_{n+1}\rightarrow
\mathfrak{D}f~~as~~n\rightarrow +\infty.
\end{eqnarray*}
Since limit of a convergent sequence is always unique, therefore, we have $f=\mathfrak{D}f$ \textit{i.e.} $f$ is a fixed function of
$\mathfrak{D}$.
 This completes the proof.\\
\end{proof}

\begin{example}
Let $U=[0,2]$ and $\hat{d}(u,v)=|u-v|$. Let $\mathfrak{F}$ be the family of bounded functions on $[0,2]$ and $\mathfrak{D}:\mathfrak{F}\rightarrow \mathfrak{F}$ be defined as $\mathfrak{D}f=f^2$ and $d^*$ be the metric defined on $\mathfrak{f}$ as
$$
d^*(f,g)=\int^{2}_{0}|f(u)-g(u)|du.
$$
Let
\begin{eqnarray*}
f(u)=\begin{cases}1&~~u\in [0,1] \\0&~~otherwise\end{cases}
\end{eqnarray*}
\begin{eqnarray*}
g(u)=\begin{cases}-1&~~u\in [0,1] \\0&~~otherwise\end{cases}
\end{eqnarray*}
and
\begin{eqnarray*}
\alpha(f(u),g(v))=\begin{cases}2&~~u\in [0,1]
\\0&~~otherwise~.\end{cases}
\end{eqnarray*}
Clearly, $(U, \hat{d})$ is a complete metric space and $\mathfrak{D}$ is a continuous mapping. Moreover, $f$ and $g$ are
bounded functions and $\mathfrak{D}$ is $\alpha$-admissible as
\begin{equation*}
\alpha(f(u),g(v))\geq 1~~\Rightarrow~u\in [0,1]
\end{equation*}
and for $u\in [0,1]$, we have
\begin{equation*}
\alpha(\mathfrak{D}f(u),\mathfrak{D}g(v))=\alpha(f^2(u),g^2(v))\geq 1.
\end{equation*}
Now we show that $\mathfrak{D}$ is an $\alpha-\psi$ contractive mapping. To prove this, Let $\psi \in \Psi$ be a function defined as
$\psi(u)=\frac{u}{2}$.\\

\textbf{Case I}: When $u\in [0,1]$, then
\begin{eqnarray*}
\alpha(f(u),g(v))d^*(\mathfrak{D}f,\mathfrak{D}g)&=&2\int^{2}_{0}|f^2(u)-g^2(u)|du\\
&=&2\int^{1}_{0}|f^2(u)-g^2(u)|du+ 2\int^{2}_{1}|f^2(u)-g^2(u)|du \\
&=&2(0)+2(0)=0\\
&\leq&u = \psi(d^*(f,g)).
\end{eqnarray*}
\textbf{Case II}: When $u\in (1,2]$, then
\begin{equation*}
\alpha(f(u),g(v))d^*(\mathfrak{D}f,\mathfrak{D}g)=0\leq
\psi(d^*(f,g)).
\end{equation*}
Thus, all the conditions needed for Theorem \ref{t3} are fulfilled, so, there must exists a fixed function in $\mathfrak{F}$. In this example, $f$ is a fixed function of $\mathfrak{D}$.\\
\end{example}

{\bf Uniqueness}:
By considering the following hypothesis, the uniqueness of fixed function in Theorem \ref{t3} will be assured.\\
(H): for all $f,g \in \mathfrak{F}$, there exists $h \in
\mathfrak{F}$ such that
$$
\alpha(f(u),h(v))\geq 1~~and~~\alpha(g(u),h(v))\geq 1.
 $$
 \begin{theorem}
 Adding condition (H) to the hypothesis of Theorem \ref{t3}, we obtain the uniqueness of fixed function of $\mathfrak{D}$.
\end{theorem}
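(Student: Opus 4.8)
The plan is to show that any two fixed functions of $\mathfrak{D}$ must coincide (in the sense of the equivalence $\sim$ that makes $d^*$ a metric), by exhibiting a single sequence that converges to both of them. So suppose $f,g\in\mathfrak{F}$ are fixed functions of $\mathfrak{D}$, i.e. $\mathfrak{D}f=f$ and $\mathfrak{D}g=g$. Applying hypothesis (H) to this pair, I would fix a function $h\in\mathfrak{F}$ with $\alpha(f(u),h(v))\geq 1$ and $\alpha(g(u),h(v))\geq 1$ for all $u,v\in U$, and then study the orbit $\{\mathfrak{D}^n h\}_{n\in\mathbb{N}}$. The entire argument reduces to proving that $\mathfrak{D}^n h\to f$ and $\mathfrak{D}^n h\to g$ as $n\to+\infty$; the uniqueness of limits in the complete metric space $(\mathfrak{F},d^*)$ then forces $f\sim g$.

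The first substantive step is an induction, using $\alpha$-admissibility together with the fixed-function property, to establish that $\alpha(f(u),\mathfrak{D}^n h(v))\geq 1$ for every $n\in\mathbb{N}$ and all $u,v\in U$. The base case is exactly the choice of $h$ from (H). For the inductive step, if $\alpha(f(u),\mathfrak{D}^n h(v))\geq 1$, then $\alpha$-admissibility gives $\alpha\bigl(\mathfrak{D}f(u),\mathfrak{D}(\mathfrak{D}^n h)(v)\bigr)\geq 1$, and since $\mathfrak{D}f=f$ this is precisely $\alpha(f(u),\mathfrak{D}^{n+1}h(v))\geq 1$. The same induction, starting from the other inequality supplied by (H), yields $\alpha(g(u),\mathfrak{D}^n h(v))\geq 1$ for all $n$. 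This is the step I expect to be the main (though modest) obstacle, since it is the only place where the fixed-function identity $\mathfrak{D}f=f$ must be fed back into the admissibility chain; everything afterwards is routine.

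With these inequalities in hand I would invoke the $\alpha$-$\psi$ contractive condition (\ref{e24}) exactly as in the proof of Theorem \ref{t3}: because $\alpha(f(u),\mathfrak{D}^n h(v))\geq 1$ and $d^*\geq 0$,
\begin{equation*}
d^*(f,\mathfrak{D}^{n+1}h)=d^*\bigl(\mathfrak{D}f,\mathfrak{D}(\mathfrak{D}^n h)\bigr)\leq \alpha(f(u),\mathfrak{D}^n h(v))\,d^*\bigl(\mathfrak{D}f,\mathfrak{D}(\mathfrak{D}^n h)\bigr)\leq \psi\bigl(d^*(f,\mathfrak{D}^n h)\bigr).
\end{equation*}
Iterating this telescoping estimate gives $d^*(f,\mathfrak{D}^n h)\leq \psi^n\bigl(d^*(f,h)\bigr)$ for all $n$. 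Since $\psi\in\Psi$ satisfies $\sum_{n=1}^{+\infty}\psi^n(t)<+\infty$, the general term tends to zero, so $\psi^n\bigl(d^*(f,h)\bigr)\to 0$ and hence $d^*(f,\mathfrak{D}^n h)\to 0$, i.e. $\mathfrak{D}^n h\to f$. Running the identical computation with $g$ in place of $f$ gives $\mathfrak{D}^n h\to g$. Finally, uniqueness of the limit of the sequence $\{\mathfrak{D}^n h\}$ in $(\mathfrak{F},d^*)$ gives $d^*(f,g)=0$, that is $f\sim g$, which establishes the uniqueness of the fixed function and completes the argument.
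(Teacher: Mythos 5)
Your proposal is correct and follows essentially the same route as the paper's own proof: apply (H) to obtain $h$, propagate $\alpha(f(u),\mathfrak{D}^n h(v))\geq 1$ by $\alpha$-admissibility, derive $d^*(f,\mathfrak{D}^n h)\leq \psi^n\bigl(d^*(f,h)\bigr)$ from the $\alpha$-$\psi$ contractive condition, and conclude $f\sim g$ by uniqueness of limits of $\{\mathfrak{D}^n h\}$. If anything, your write-up is slightly more careful than the paper's, since you make explicit the inductive step where the fixed-function identity $\mathfrak{D}f=f$ is fed back into the admissibility chain, which the paper passes over silently.
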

\begin{proof}
 Let us suppose that $f^*$ and $g^*$ be two fixed functions of $\mathfrak{D}$. From (H), there exists some $h^* \in \mathfrak{F}$ such that
\begin{equation}\label{e6}
\alpha(f^*(u),h^*(v))\geq 1~~and~~\alpha(g^*(u),h^*(v))\geq 1.
 \end{equation}
Since $\mathfrak{D}$ is $\alpha$-admissible, by (\ref{e6}), we have
\begin{equation}\label{e7}
\alpha(f^*(u),\mathfrak{D}^{n}{h^*(v)})\geq
1~~and~~\alpha(g^*(u),\mathfrak{D}^{n}h^*(v))\geq 1~~for~~all~~n \in
\mathbb{N}.
 \end{equation}
 Using (\ref{e7}) and $\alpha-\psi$ contractive condition
\begin{eqnarray*}
d^*(f^*,\mathfrak{D}^{n}h^*)&=&d^*(\mathfrak{D}f^*,\mathfrak{D}(\mathfrak{D}^{n-1}h^*))\\
&\leq&\alpha(f^*(u),\mathfrak{D}^{n-1}h^*(v))d^*(\mathfrak{D}f^*,\mathfrak{D}(\mathfrak{D}^{n-1}h^*))\\
&\leq&\psi(d^*(f^*,\mathfrak{D}^{n-1}h^*))
 \end{eqnarray*}
 which implies that
\begin{eqnarray*}
d^*(f^*,\mathfrak{D}^{n}h^*)&\leq&\psi^n(d^*(f^*,h^*))~for
~~all~~n\in \mathbb{N}.
 \end{eqnarray*}
Taking limit $n\rightarrow +\infty$, we get
\begin{equation*}
\mathfrak{D}^{n}h^*\rightarrow f^*.
 \end{equation*}
 Similarly,
 \begin{equation*}
\mathfrak{D}^{n}h^*\rightarrow g^*.
 \end{equation*}
 Uniqueness of limit gives $f^* = g^*$. This proves the theorem.\\
\end{proof}

\section{Application}
The application in this section is based on best approximation of treatment plan for tumor patients getting intensity modulated radiation therapy(IMRT).\\

 In {\cite{bort}}, Thomas Bortfeld presented some clinically complex cases to calculate dose approximation along with optimization algorithms(T. Bortfeld, \textit{Optimized planning using physical objectives and constraints}, Seminars in Radiation Oncology, 9(1999), 20-34). In {\cite{shep}},  Shepard \textit{et al.} presented some techniques such as ratio method, least-squares minimization and the maximum-likelihood estimator to develop algorithms for the problems encountered in tomotherapy(D.M. Shepard, G.H. Olivera, P.J. Reckwerdt and T.R. Mackie, \textit{Iterative approaches to dose optimization in tomotherapy}, Physics in Medicine and Biology, 45(2000), 69-90).\\

 In these techniques, a dose deposition coefficient(DDC) matrix is often computed to approximate dose distribution to each voxel in required volume of interest from every beamlet with unit intensity. But we usually get a large set of data during calculation that requires a huge computer memory and computational efficiency. As a result, small values from DDC matrix are usually truncated that affects the quality of treatment plan.\\

Fixed point iteration method is very efficient and effective technique to solve this problem. In this technique, a proper DDC matrix truncation has been used that significantly improves accuracy of results.\\

In 2013, Z. Tian \textit{et al.}{\cite{tian}} presented a fluence map optimization(FMO) model for dose calculation by splitting the DDC matrix into two components $\mathfrak{D_1}$ and $\mathfrak{D_2}$ on the basis of a threshold value. The matrix $\mathfrak{D_1}$(major component) consists those values of DDC matrix which are higher than the threshold whereas the minor component $\mathfrak{D_2}$ consists remaining values. In fact, $\mathfrak{D_1}$ represents those doses which correspond to tumor area voxels(specifically) while $\mathfrak{D_2}$ represents scatter doses passing at large distances. The problem can be interpreted as:
\begin{equation}\label{e8}
x^{(k+1)}=argmin_{x} |\mathfrak{D_1}x+\delta^{(k)}-T|
\end{equation}
\begin{equation}\label{e9}
\delta^{(k+1)}=\mathfrak{D_2} {x^{(k+1)}}
\end{equation}
The above model consists of two loops namely inner loop and outer loop. Here $k$ denotes the iteration index of outer loop. Equation (\ref{e8}) represents inner loop,  which can be solved by using iterative algorithm for value $\delta^{(k)}$. $\delta^{(k)}$ is the dose value corresponding to $\mathfrak{D_2}$. The matrix $\mathfrak{D_1}$ contains much reduced number of non zero elements as compared to DDC full matrix. So, inner loop will converge more quickly than the original matrix. The outer loop represented by equation (\ref{e9}) updates the values of $\delta^{(k+1)}$ using minor matrix $\mathfrak{D_2}$. $T$ represents the prescription dose for PTV(planned target volume) voxels and threshold dose for OAR(organs at risk) voxels. This mapping gives rise to a sequence $x^{(0)},~ x^{(1)},~ x^{(2)}, ...$ containing different dose distributions corresponding to a patient. If the matrix $\mathfrak{D_2}$ contains very small(or negligible) values and there exists such a $\lambda$ for which the contraction condition is satisfied, then the suitable dose exists for a patient at a time.\\

Following this concept, the treatment plan for more than a patient at a time, is presented through our results in a more effective way. The results proposed in this paper provide a very efficient and easy technique for estimation of suitable treatment plan.\\

 In the present case, two tumor patients have been considered with different tumor levels. Let $U$ denotes the set of all threshold intensity values(with unit Gy) to be given on particular days and in particular sessions. A patient is getting the therapy two times a day. Days and sessions are denoted by $D$ and $S$ respectively.\\
$$
U={\begin{cases}(1,D_1S_1),(\frac{1}{2},D_1S_2),(1,D_2S_1),(\frac{1}{2},D_2S_2)&~~Patient-I, \\(1,D_1S_1),(2,D_1S_2),(1,D_2S_1),(2,D_2S_2)&~~Patient-II.\end{cases}}
$$
Note that $U$ is complete being a closed and bounded subset of $\mathbb{R}^2$. Let $\mathfrak{F}=\{f_1,f_2\}$ be the family of dose functions and each function represents different dose distributions(to tumor locations) of different tumor patients during IMRT.
\begin{eqnarray*}
f_1(u)={\begin{cases}2u&~~Patient-I, \\u&~~Patient-II.\end{cases}}~~and~~~f_2(u)={\begin{cases}\frac{u}{3}&~~Patient-I, \\\frac{2u}{3}&~~Patient-II.\end{cases}}
\end{eqnarray*}
It is to be noted that $\mathfrak{F}$ is the family of bounded functions. Let $\mathfrak{D}:\mathfrak{F}\rightarrow \mathfrak{F}$ be the mapping defined as $\mathfrak{D}f=f^2-2f+2~~\forall~~f\in \mathfrak{F}$. It is required to prove that $\mathfrak{D}$ is a $\mathfrak{D}$-contraction mapping. For $u,v\in U$, we have the following cases:\\
\begin{multicols}{2}
For Patient-I\\

Case I- When $u=v=1$. Then
\begin{eqnarray*}
|\mathfrak{D}f_1-\mathfrak{D}f_2|=\frac{5}{9}~~and~~|f_1-f_2|=\frac{5}{3}.
\end{eqnarray*}

Case II- When $u=v=\frac{1}{2}$. Then
\begin{eqnarray*}
|\mathfrak{D}f_1-\mathfrak{D}f_2|=\frac{25}{36}~~and~~|f_1-f_2|=\frac{5}{6}.
\end{eqnarray*}

Case III- When $u=1,~~v=\frac{1}{2}$. Then
\begin{eqnarray*}
|\mathfrak{D}f_1-\mathfrak{D}f_2|=\frac{11}{36}~~and~~|f_1-f_2|=\frac{11}{6}.
\end{eqnarray*}

Case IV- When $u=\frac{1}{2},~~v=1$. Then
\begin{eqnarray*}
|\mathfrak{D}f_1-\mathfrak{D}f_2|=\frac{4}{9}~~and~~|f_1-f_2|=\frac{2}{3}.
\end{eqnarray*}

For Patient-II\\

Case I- When $u=v=1$. Then
\begin{eqnarray*}
|\mathfrak{D}f_1-\mathfrak{D}f_2|=\frac{1}{9}~~and~~|f_1-f_2|=\frac{1}{3}.
\end{eqnarray*}

Case II- When $u=v=2$. Then
\begin{eqnarray*}
|\mathfrak{D}f_1-\mathfrak{D}f_2|=\frac{8}{9}~~and~~|f_1-f_2|=\frac{2}{3}.
\end{eqnarray*}

Case III- When $u=1,~~v=2$. Then
\begin{eqnarray*}
|\mathfrak{D}f_1-\mathfrak{D}f_2|=\frac{1}{9}~~and~~|f_1-f_2|=\frac{1}{3}.
\end{eqnarray*}

Case IV- When $u=2,~~v=1$. Then
\begin{eqnarray*}
|\mathfrak{D}f_1-\mathfrak{D}f_2|=\frac{8}{9}~~and~~|f_1-f_2|=\frac{4}{3}.
\end{eqnarray*}
\end{multicols}

From all above cases, for Patient-I
\begin{align*}
d^*(\mathfrak{D}f_1,\mathfrak{D}f_2)&=sup \{|\mathfrak{D}f_1-\mathfrak{D}f_2|~u,v\in U\}\\
&=\frac{25}{36}\leq \frac{2}{3}\times \frac{11}{6}\\
&=\lambda d^*(f_1,f_2).
\end{align*}
 and for Patient-II
\begin{align*}
d^*(\mathfrak{D}f_1,\mathfrak{D}f_2)&=sup \{|\mathfrak{D}f_1-\mathfrak{D}f_2|~u,v\in U\}\\
&=\frac{8}{9}\leq \frac{2}{3}\times\frac{4}{3}\\
&=\lambda d^*(f_1,f_2)
\end{align*}
where $\lambda=\frac{2}{3}< 1$.\\

Thus, all the conditions required for Theorem \ref{t26} are fulfilled.
Therefore, there exists a unique fixed function $f_1$ of $\mathfrak{D}$ that yields suitable doses for two patients at the same time.

\section{Conclusion}
Till now, fixed point results have a wide range of applications in various fields such as Engineering, Functional Analysis, Optimization Theory etc. But the concept of fixed function is yet not defined. In future, this new extended concept can be applied in various forms to prove existence and uniqueness of fixed functions $\textit{i.e}$ by changing nature of mappings, using different contractive conditions, by changing space or by using different topological structures. The effectiveness of this result is directly given by the application which helps us to diagnose a number of patients at same time.


\end{document}